\theoremstyle{plain}
\newtheorem{theorem}{Theorem}
\newtheorem{lemma}{Lemma}
\newtheorem{corollary}{Corollary}
\theoremstyle{definition}
\begin{document}
\title[The rate of increase of mean values]%
{The rate of increase of mean values of functions in weighted Hardy spaces}
\author{chengji xiong \& junming liu }
\address{Department of Mathematics, Shantou University, Shantou, Guangdong,
515063,\newline P.~R.~China} \email{chengji\_xiong@yahoo.com,\ 08jmliu@stu.edu.cn}
\begin{abstract}
Let $0<p<\infty$ and $0\leq q<\infty$. For each $f$ in the weighted
Hardy space $H_{p, q}$,\ we show that $d\|f_r\|_{p,q}^p/dr$ grows at
most like $o(1/1- r)$ as $r\rightarrow 1$.
\end{abstract}
\keywords{Hardy space, weighted Hardy space, mean value.}
\subjclass[2000]{30H10} \maketitle

\section{Introduction}
Let ${\mathbb D}$ be the unit disk in the complex plane and $0<p<\infty$. The Hardy
space $H^{p}(\mathbb{D})$ is the family of all analytic functions $f$ in ${\mathbb D}$ satisfying
$$\|f\|_{p}= \lim_{r\rightarrow 1}\|f_{r}\|_{p}<\infty ,$$
where
$$\|f_{r}\|_{p}=\Big( \frac{1}{2\pi}\int_{0}^{2\pi}|f(re^{i\theta})|^{p}d\theta\Big)^{1/p}\ .$$
If $0\leq q<\infty$, define
$$\|f_r\|_{p,q}=\Big(\frac{1}{2\pi}\int_{0}^{2\pi}|f(re^{i\theta})|^{p}(1-r^{2})^{q}d\theta \Big)^{1/p}\ .$$
The weighted Hardy space $H_{p,q}(\mathbb D)$ is the family of all analytic functions $f$ in $\mathbb D$ satisfying
$$\|f\|_{p,q}=\sup_{0<r<1}\|f_r\|_{p,q}<\infty .$$

It is trivial that $H_{p,0}$ is just the classical Hardy space $H^p$ and $H_{p,q}$ is a Banach space for $p\ge 1$.
 There are many books about $H^p$ and $H_{p,q}$ such as Duren's book \cite{D} and Zhu's book \cite{Zhu}. It is well known that $\|f_{r}\|_{p}$ is a nondecreasing function of $r$. Furthermore, Hardy \cite{GH} showed that $\log\|f_{r}\|_{p}$ is a convex function of $\log r$.
 Other properties of mean values of analytic functions can be found in \cite{EF, EFW,HS}.

Recently, Mashreghi \cite{JM} showed that $d\|f_{r}\|_{p}/dr$ grows
at most like $o(1/1-r)$ as $r\rightarrow1$. In this paper, we
generalized this result to weighted Hardy spaces.

\section{Lemmas}
Suppose that $f\not\equiv 0$ is analytic in $\mathbb D$. Let $W(z)=|f(z)|^p(1-|z|^2)^q$ for $0<p<\infty$, $0\le q<\infty$, and $\nabla$ be the gradient operator.

\begin{lemma}Let $0<r\le 1$ and $D_r=\{z:|z|<r\}$. For $z_0\in D_r$ and small $\varepsilon >0$,
let $D(z_{0}, \varepsilon)=\{z:|z-z_0|\le\varepsilon\}\subset D_r$
and
$$I_{\varepsilon}=\int_{\partial D(z_{0}, \varepsilon)}\Big( \log(r/|z|)\frac{\partial W}{\partial n}-W\frac{\partial\log(r/|z|)}{\partial n} \Big)d\ell .$$
If $z_0\neq 0$ is a zero of $f$, then $\lim_{\varepsilon\rightarrow
0}I_{\varepsilon}=0$. If $z_0=0$ then $\lim_{\varepsilon\rightarrow
0}I_{\varepsilon}=2\pi |f(0)|^p$.
\end{lemma}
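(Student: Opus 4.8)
The plan is to read $I_\varepsilon$ as the contribution of a single isolated singularity to the boundary integral in Green's second identity, in which the harmonic weight $u(z)=\log(r/|z|)$ is paired against $W$; the lemma merely extracts the limiting value of this local contribution, with $n$ taken to be the outward normal of the excised disk $D(z_0,\varepsilon)$. I would treat the two cases separately and estimate the two pieces
$$I_\varepsilon=\underbrace{\int_{\partial D(z_0,\varepsilon)}\log(r/|z|)\,\frac{\partial W}{\partial n}\,d\ell}_{A_\varepsilon}-\underbrace{\int_{\partial D(z_0,\varepsilon)}W\,\frac{\partial\log(r/|z|)}{\partial n}\,d\ell}_{B_\varepsilon}$$
using the local size of $W$ and $\nabla W$ near $z_0$ together with the arc-length factor $2\pi\varepsilon$.

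For the case $z_0\neq0$ I would use the factorization $f(z)=(z-z_0)^m g(z)$ with $g(z_0)\neq0$ and $m\ge1$, so that $W(z)=|z-z_0|^{mp}\,|g(z)|^p(1-|z|^2)^q$. From this one reads off $W=O(\varepsilon^{mp})$ and $|\nabla W|=O(\varepsilon^{mp-1})$ on the circle $|z-z_0|=\varepsilon$. Since $z_0\neq0$, the factor $\log(r/|z|)$ and its normal derivative stay bounded there (as $|z|$ is bounded away from $0$ for small $\varepsilon$), so $A_\varepsilon=O(\varepsilon\cdot\varepsilon^{mp-1})=O(\varepsilon^{mp})$ and $B_\varepsilon=O(\varepsilon\cdot\varepsilon^{mp})=O(\varepsilon^{mp+1})$; both vanish as $\varepsilon\to0$ because $mp>0$, giving $\lim I_\varepsilon=0$.

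For the case $z_0=0$ the circle is $|z|=\varepsilon$, so $\log(r/|z|)=\log(r/\varepsilon)$ is constant on it while the outward normal derivative equals $-1/\varepsilon$. Parametrizing $z=\varepsilon e^{i\theta}$ turns $B_\varepsilon$ into $-\int_0^{2\pi}W(\varepsilon e^{i\theta})\,d\theta$, which tends to $-2\pi|f(0)|^p$ by continuity of $W$ at $0$; hence $-B_\varepsilon\to2\pi|f(0)|^p$. It remains to show $A_\varepsilon\to0$, and here lies the main obstacle: the constant $\log(r/\varepsilon)$ blows up, so I cannot merely bound the integrand pointwise. The fix is the divergence theorem, $\int_{\partial D(0,\varepsilon)}\frac{\partial W}{\partial n}\,d\ell=\int_{D(0,\varepsilon)}\Delta W\,dA$; when $f(0)\neq0$ the function $W$ is smooth near $0$, so $\Delta W$ is bounded and the right-hand side is $O(\varepsilon^2)$, whence $A_\varepsilon=\log(r/\varepsilon)\,O(\varepsilon^2)\to0$. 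If instead $f(0)=0$ of order $m$, I would fall back on the direct estimate $|\nabla W|=O(\varepsilon^{mp-1})$ to get $A_\varepsilon=O(\varepsilon^{mp}\log(1/\varepsilon))\to0$ and $B_\varepsilon\to0=-2\pi|f(0)|^p$, consistent with the stated value.

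The one delicate technical point throughout is justifying the gradient bound $|\nabla W|=O(|z-z_0|^{mp-1})$ when $p$ is not an even integer and $|f|^p$ fails to be smooth at the zero; this is handled by differentiating the factorization $|z-z_0|^{mp}\cdot(\text{smooth positive factor})$ directly, the leading radial derivative producing the exponent $mp-1$.
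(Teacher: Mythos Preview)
Your argument is correct and follows the same decomposition $I_\varepsilon=A_\varepsilon-B_\varepsilon$ and the same case split as the paper; the paper obtains the gradient estimate via the inequality $|\nabla|f|^p|\le p|f|^{p-1}|f'|$ rather than by factoring $f=(z-z_0)^m g$, but these are equivalent. The only place you work harder than needed is the subcase $z_0=0$, $f(0)\neq0$: contrary to your stated concern, a pointwise bound \emph{does} suffice, because $W$ is $C^1$ near $0$, so $|\partial W/\partial n|\le C$ on $|z|=\varepsilon$ and the arc-length factor $2\pi\varepsilon$ already kills the logarithm, yielding $|A_\varepsilon|\le C\,\varepsilon\log(r/\varepsilon)\to0$ --- this is precisely what the paper does, and your detour through the divergence theorem, while valid, is unnecessary.
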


\begin{proof}
Since $W(z)=|f(z)|^p(1-|z|^2)^q$, direct computation gives
$$\nabla W(z)=(1-|z|^2)^q\nabla |f(z)|^p+|f(z)|^p\nabla (1-|z|^2)^q$$
and
$$|\nabla |f(z)|^p|\le p|f(z)|^{p-1}|f'(z)| ,\quad |\nabla (1-|z|^2)^q|=2q|z|(1-|z|^2)^{q-1} .$$
So
$$\Big| \frac{\partial W(z)}{\partial n} \Big|\le\big| \nabla W(z) \big|\le (1-|z|^2)^q|\nabla |f(z)|^p|+|f(z)|^p|\nabla (1-|z|^2)^q|$$
$$\le p|f(z)|^{p-1}|f'(z)|(1-|z|^2)^q+2q|z||f(z)|^p(1-|z|^2)^{q-1}$$
and
$$\Big|\frac{\partial\log(r/|z|)}{\partial n}\Big|\le |\nabla \log(r/|z|)|=\frac{1}{|z|}.$$
Write
\begin{eqnarray*}
I_{\varepsilon} &=& \int_{\partial D(z_{0}, \varepsilon)}\Big( \log(r/|z|)\frac{\partial W}{\partial n}-W\frac{\partial\log(r/|z|)}{\partial n} \Big)d\ell \\
&=& \int_{0}^{2\pi}\log(r/|z_0+\varepsilon e^{i\theta}|)\frac{\partial W(z_0+\varepsilon e^{i\theta})}{\partial n}\varepsilon d\theta - \int_{\partial D(z_{0}, \varepsilon)}W(z)\frac{\partial\log(r/|z|)}{\partial n}d\ell \\
&=& I_1-I_2.
\end{eqnarray*}
For convenience, let $C>0$ be a constant independent of $\varepsilon$, whose value may change from line to line.

If $z_0\neq 0$ is a zero of order $k\ge 1$, then $|I_1|\le C\varepsilon^{kp}$ and $|I_2|\le C\varepsilon^{k+1}$. Thus
$\lim_{\varepsilon\rightarrow 0}I_{\varepsilon}(z_0)=0$.

If $z_0=0$ and $f(z_0)\neq 0$, then $|I_1|\le \varepsilon\log (r/\varepsilon)$
and
$$-I_2=\int_{0}^{2\pi}W(\varepsilon e^{i\theta})d\theta=(1-\varepsilon^2)^q\int_{0}^{2\pi}|f(\varepsilon e^{i\theta})|^p d\theta .$$
Hence $\lim_{\varepsilon\rightarrow 0}I_{\varepsilon}=2\pi |f(0)|^p$.

At last, if $z_0=0$ is a zero of $f$ of order $k\ge 1$, then
$|I_1|\le C\varepsilon^{kp}\log (r/\varepsilon)$ and $|I_2|\le C\varepsilon ^{kp}$. We have $\lim_{\varepsilon\rightarrow 0}I_{\varepsilon}=2\pi |f(0)|^p$ also.
\end{proof}

\begin{lemma}
Let $f\in H_{p,\ q}(\mathbb{D})$, $0<p<\infty$, $0\leq q<\infty$, and $f\not\equiv 0$.
Then
$$\lim_{r\rightarrow 1}\int_{0}^{2\pi}|f(re^{i\theta})|^{p} (1-r^{2})^{q}d\theta - 2\pi |f(0)|^{p}
=\int_{|z|<1}\log(1/|z|)G(z) dxdy\ ,$$
where
$$G(z)=(1-|z|^{2})^{q}\nabla^{2}|f|^{p}+2\nabla|f|^{p}\cdot \nabla(1-|z|^{2})^{q} +|f|^{p}\nabla^{2}(1-|z|^{2})^{q}.$$
\end{lemma}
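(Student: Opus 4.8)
The plan is to recognize $\log(r/|z|)$ as (a constant multiple of) the Green's function of the disk $D_r$ with pole at the origin, and to apply Green's second identity to the pair $u=\log(r/|z|)$ and $v=W$ on $D_r$; Lemma 1 is precisely the device that controls the contributions arising from the singularity of $u$ at $0$ and from the points where $W$ fails to be smooth, namely the zeros of $f$. First I would fix $r<1$ and establish the exact identity
$$\int_{0}^{2\pi}|f(re^{i\theta})|^{p}(1-r^{2})^{q}\,d\theta-2\pi|f(0)|^{p}=\int_{D_r}\log(r/|z|)\,\nabla^{2}W\,dx\,dy,\qquad(\ast)$$
and only afterwards let $r\rightarrow 1$.

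To prove $(\ast)$, let $z_{1},\dots,z_{n}$ be the finitely many zeros of $f$ in $D_r$ and set $\Omega_{\varepsilon}=D_r\setminus\bigl(\overline{D(0,\varepsilon)}\cup\bigcup_{j}\overline{D(z_{j},\varepsilon)}\bigr)$, always excising a disk about the origin since $u$ is singular there even when $f(0)\neq 0$. On $\Omega_{\varepsilon}$ the factors $(1-|z|^{2})^{q}$ and $|f|^{p}$ are smooth and $u=\log(r/|z|)$ is harmonic, so Green's second identity collapses to
$$\int_{\Omega_{\varepsilon}}\log(r/|z|)\,\nabla^{2}W\,dx\,dy=\int_{\partial\Omega_{\varepsilon}}\Bigl(\log(r/|z|)\frac{\partial W}{\partial n}-W\frac{\partial\log(r/|z|)}{\partial n}\Bigr)\,d\ell .$$
On the outer circle $|z|=r$ the factor $\log(r/|z|)$ vanishes and $\partial\log(r/|z|)/\partial n=-1/r$, so the outer piece equals $\int_{0}^{2\pi}W(re^{i\theta})\,d\theta$. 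On each inner circle the outward normal of $\Omega_{\varepsilon}$ is opposite to the one used in Lemma 1, so those pieces equal $-I_{\varepsilon}(z_{j})$ and $-I_{\varepsilon}(0)$; letting $\varepsilon\rightarrow 0$ and invoking Lemma 1, the nonzero zeros contribute $0$ while the origin contributes $2\pi|f(0)|^{p}$, which gives exactly $(\ast)$. Here I must also verify that the area integrand is absolutely integrable near the singular points: away from the zeros one has $\nabla^{2}|f|^{p}=p^{2}|f|^{p-2}|f'|^{2}$, so near a zero of order $k$ the Laplacian of $W$ is of size $|z-z_{j}|^{kp-2}$, which is integrable in the plane, and $\log(r/|z|)$ contributes only a harmless logarithmic factor at the origin.

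Finally I would let $r\rightarrow 1$ in $(\ast)$. Writing $\log(r/|z|)=\log(1/|z|)+\log r$, the term $\int_{D_r}\log(1/|z|)\,\nabla^{2}W\,dx\,dy$ should tend to $\int_{|z|<1}\log(1/|z|)G\,dx\,dy$, while the error $\log r\int_{D_r}\nabla^{2}W\,dx\,dy$ should vanish. I expect this passage to the limit to be the main obstacle. The difficulty lies at the boundary $|z|\rightarrow 1$: the dominant piece of $G$ is $|f|^{p}\,\nabla^{2}(1-|z|^{2})^{q}$, which is of order $|f|^{p}(1-|z|^{2})^{q-2}$, and the crude bound $\int_{0}^{2\pi}|f(\rho e^{i\theta})|^{p}\,d\theta\le 2\pi\|f\|_{p,q}^{p}(1-\rho^{2})^{-q}$ coming from $f\in H_{p,q}$ yields only a logarithmically divergent estimate for $\int\log(1/|z|)\,|G|$. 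Thus absolute convergence is not automatic, and the integral on the right must be understood as the improper limit $\lim_{r\rightarrow 1}\int_{D_r}$; the real work is to show that this limit exists and that the $\log r$ error term is negligible, exploiting membership in $H_{p,q}$ (together with the monotonicity of $\|f_{\rho}\|_{p}$) more delicately than the crude bound above. Once this limit is justified, $(\ast)$ delivers the asserted formula at once.
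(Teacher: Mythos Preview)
Your argument is essentially the paper's own: excise small disks about the origin and the zeros, apply Green's second identity to $W$ and the harmonic function $\log(r/|z|)$ on the punctured region, use Lemma~1 to handle the small circles, read off the outer boundary term as $\int_{0}^{2\pi}W(re^{i\theta})\,d\theta$, and then let $r\to 1$. The only cosmetic difference is your splitting $\log(r/|z|)=\log(1/|z|)+\log r$; the paper does not split and simply writes ``Now we obtain the lemma as $r\rightarrow 1$'' after deriving the identity on $D_r$.

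On the point you flag as the main obstacle: the paper offers no further justification for the passage $r\to 1$ beyond that one sentence, so the convergence issue you identify is not resolved there either. In effect the right-hand side is taken as the improper limit $\lim_{r\to 1}\int_{D_r}\log(r/|z|)G\,dx\,dy$, whose existence follows immediately from the identity $(\ast)$ together with the existence of $\lim_{r\to 1}\|f_r\|_{p,q}^p$ (the left-hand side). Your extra worry about the term $\log r\int_{D_r}G$ is self-inflicted by the splitting; if you avoid it, as the paper does, the identity $(\ast)$ already gives the lemma the moment you know the left side converges.
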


\begin{proof}Since $f\not\equiv 0$ is analytic in $\mathbb D$, for any $0<R<1$, we can choose $r$, $R<r<1$ such that $f$ has finite many zeros in $D_r=\{|z|<r\}$ and no zeros on the circle $\partial D_r$. Let $\{z_1,z_2,\cdots,z_n\}$ be the set consisting of $0$ and all zeros of $f$ in $D_r$. Take $\varepsilon >0$ so small that $\{z: |z-z_k|\le\varepsilon\}\subset D_r$ for $1\le k\le n$, and all these disks are disjoint. Denote
$$\Omega=D_r\backslash\bigcup_{k=1}^{n}\{z:|z-z_k|\le\varepsilon\}.$$

The function $W(z)=|f(z)|^{p}(1-|z|^{2})^{q}$ is infinitely differentiable in a neighborhood of $\overline{\Omega}$.
Then by Green's theorem, we have
$$\int_{\Omega}\log(r/|z|)\nabla^{2}W(z)dxdy=\int_{\partial \Omega}
\Big( \log(r/|z|)\frac{\partial W}{\partial
n}-W\frac{\partial\log(r/|z|)}{\partial n} \Big)d\ell . \eqno{(2.1)}$$

Direct computation gives
$$\nabla^{2}W(z)=(1-|z|^{2})^{q}\nabla^{2}|f|^{p}+2\nabla|f|^{p}\cdot \nabla(1-|z|^{2})^{q} +|f|^{p}\nabla^{2}(1-|z|^{2})^{q}=G(z).$$
The direction of every small circle in $\partial\Omega$ is
clockwise. For $\varepsilon\rightarrow 0$, by Lemma 1, formula (2.1)
becomes
$$\int_{\partial D_r}
\Big( \log(r/|z|)\frac{\partial W}{\partial n}-W\frac{\partial\log(r/|z|)}{\partial n} \Big)d\ell-2\pi |f(0)|^p=\int_{D_r}\log(r/|z|)G(z) dxdy.$$
The left-side of above formula is equal to
$$\int_{0}^{2\pi}W(re^{i\theta})d\theta-2\pi |f(0)|^p.$$
Now we obtain the lemma as $r\rightarrow 1$.
\end{proof}

\section{The rate of increase of $\|f_{r}\|_{p,q}^p$}
\begin{theorem}
Let $f\in H_{p,\ q}(\mathbb{D})$ and $f\not\equiv 0$, then
$$\frac{d\|f_r\|_{p,q}^p}{dr}=o(1/1-r) \mbox{ as } r\rightarrow 1.$$
\end{theorem}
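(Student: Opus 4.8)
The plan is to differentiate the integral representation that is implicit in Lemma 2 and then read off the decay directly from Lemma 2 itself. Write $M(r)=\|f_r\|_{p,q}^p$, so the theorem asserts $(1-r)M'(r)\to0$ as $r\to1$. The Green's-theorem computation in the proof of Lemma 2 gives, for every $r$ for which $f$ has no zero on $\partial D_r$ --- hence, both sides being continuous in $r$, for every $r\in(0,1)$ ---
$$2\pi M(r)=2\pi|f(0)|^p+\int_{D_r}\log(r/|z|)\,G(z)\,dx\,dy.$$
Passing to polar coordinates $z=se^{i\theta}$ and setting $\phi(s)=s\int_0^{2\pi}G(se^{i\theta})\,d\theta$ and $H(r)=\int_0^r\phi(s)\,ds=\int_{D_r}G\,dx\,dy$, this becomes $2\pi M(r)=2\pi|f(0)|^p+\int_0^r\log(r/s)\phi(s)\,ds$.

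First I would differentiate this in $r$. Since the weight $\log(r/|z|)$ vanishes on $\partial D_r$, the growing domain contributes no boundary term and only $\partial_r\log(r/|z|)=1/r$ survives; equivalently, splitting $\log(r/s)=\log r-\log s$, the terms from $\tfrac{d}{dr}\bigl(\log r\,H(r)\bigr)$ and $\tfrac{d}{dr}\int_0^r(\log s)\phi(s)\,ds$ cancel. Either way one obtains
$$M'(r)=\frac{1}{2\pi r}\int_{D_r}G(z)\,dx\,dy=\frac{H(r)}{2\pi r},$$
so it suffices to prove $(1-r)H(r)\to0$ as $r\to1$. To reach this I would compare the two natural logarithmic weights: since $\log(1/|z|)-\log(r/|z|)=\log(1/r)$ is constant on $D_r$,
$$\int_{D_r}\log(1/|z|)\,G\,dx\,dy=\int_{D_r}\log(r/|z|)\,G\,dx\,dy+\log(1/r)\,H(r).$$
Here the first integral on the right is $2\pi\bigl(M(r)-|f(0)|^p\bigr)$, and Lemma 2 supplies precisely the missing convergence: its right-hand side $\int_{|z|<1}\log(1/|z|)G\,dx\,dy$ is a convergent integral toward which the left-hand integral tends, and Lemma 2 equates that value with $\lim_{r\to1}2\pi\bigl(M(r)-|f(0)|^p\bigr)$. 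Subtracting these two equal limits leaves $\log(1/r)H(r)\to0$; as $\log(1/r)/(1-r)\to1$, this yields $(1-r)H(r)\to0$ and hence $(1-r)M'(r)\to0$.

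The computations are routine, so the main obstacle will be the rigor of the two limit passages. For the differentiation I must note that $G$ contains $\nabla^2|f|^p$ and is therefore only locally integrable --- it is singular at the zeros of $f$ --- so I would justify $M\in C^1(0,1)$ by working with the iterated integral $\int_0^r\log(r/s)\phi(s)\,ds$ and observing that $H$ is continuous, whence $M'(r)=H(r)/(2\pi r)$ is continuous and the differentiation is classical. The deeper point is that the decay $(1-r)H(r)\to0$ depends entirely on the convergence of $\int_{|z|<1}\log(1/|z|)G\,dx\,dy$ furnished by Lemma 2: when $q>0$ the integrand $G$ has no fixed sign (the term $|f|^p\nabla^2(1-|z|^2)^q$ is negative for small $q$), so this convergence cannot be obtained by a crude majorization and must be imported from the membership $f\in H_{p,q}$ through Lemma 2. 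That cancellation is exactly what forces the stated $o(1/(1-r))$ bound.
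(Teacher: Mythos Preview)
Your argument is correct and essentially matches the paper's: both arrive at the identity $\int_{D_r}\log(1/|z|)\,G = 2\pi\bigl(M(r)-|f(0)|^p\bigr) - 2\pi r\log r\,M'(r)$ and then invoke Lemma~2 to force $\log r\,M'(r)\to 0$. The only cosmetic difference is that the paper obtains this identity by a second application of Green's theorem with weight $\log(1/|z|)$, whereas you reach the equivalent statement by differentiating the Lemma~2 formula to get $2\pi r\,M'(r)=H(r)$ and then splitting $\log(1/|z|)=\log(r/|z|)+\log(1/r)$.
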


\begin{proof}As in the proof of Lemma 2, we choose suitable $r$ and $\varepsilon$. Replacing $\log r/|z|$ by $\log 1/|z|$, by Green's theorem, we have,
$$\int_{\Omega}\log(1/|z|)\nabla^{2}W(z)dxdy=\int_{\partial \Omega}
\Big( \log(1/|z|)\frac{\partial W}{\partial
n}-W\frac{\partial\log(1/|z|)}{\partial n} \Big)d\ell . \eqno{(3.1)}$$
For $\varepsilon\rightarrow 0$, using Lemma 1 for $r=1$, formula (3.1)
turns into
$$\int_{D_r}\log(1/|z|)G(z)dxdy=\int_{\partial D_r}
\Big( \log(1/|z|)\frac{\partial W}{\partial n}-W\frac{\partial\log(1/|z|)}{\partial n} \Big)d\ell-2\pi|f(0)|^p$$
$$=\int_{0}^{2\pi}\Big(\log\frac{1}{r}\frac{\partial W}{\partial r}(re^{i\theta})+\frac{W(re^{i\theta})}{r}\Big)rd\theta-2\pi|f(0)|^p$$
$$=\int_{0}^{2\pi}W(re^{i\theta})d\theta-r\log r\int_{0}^{2\pi}\frac{\partial W}{\partial r}(re^{i\theta})d\theta-2\pi|f(0)|^p$$
$$=\int_{0}^{2\pi}|f(re^{i\theta})|^p(1-r^2)^qd\theta-2\pi r\log r\frac{d||f_r||_{p,q}^p}{dr}-2\pi |f(0)|^p.$$
Now let $r\rightarrow 1$. By Lemma 2, we have
$$\lim_{r\rightarrow 1}\log r\frac{d\|f_r\|_{p,q}^p}{dr}=0.$$
Hence,
$$\lim_{r\rightarrow 1}\frac{d\|f_r\|_{p,q}^p}{dr}=o(1/\log r)=o(1/1-r).$$
\end{proof}

By Theorem 1, if $\lim_{r\rightarrow 1}\|f_r\|_{p,q}\neq 0$, or for $q=0$, $\|f_r\|_{p,q}$ is increasing with $r$, then we can get $\lim_{r\rightarrow 1}d\|f_r\|_{p,q}/dr=o(1/1-r)$.

In the proof of Theorem 1, using $1$ instead of $\log 1/|z|$, similar arguments can deduce
$$2\pi r\frac{d\|f_r\|_{p,q}^p}{dr}=\int_{|z|<r}G(z)dxdy.$$

\begin{corollary}
If $f\in H_{p, q}$ with $0<p<\infty,\ 0\leq q<\infty$, then we have
$$2\lim_{r\rightarrow 1}\int_{0}^{2\pi}|f(re^{i\theta})|^{p}(1-r^{2})^{q}d\theta=
\int_{\mathbb
D}(1-|z|^{2})\nabla^{2}\Big(|f(z)|^{p}(1-|z|^{2})^{q}\Big)dxdy$$
$$+\ 4\int_{\mathbb{D}}|f(z)|^{p}(1-|z|^{2})^{q}dxdy. \eqno{(3.2)}$$
\end{corollary}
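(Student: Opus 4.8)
The plan is to reduce both integrals on the right-hand side of (3.2) to one-variable radial integrals and to match them against the left-hand side by integrating by parts twice. Throughout write $M(r)=\int_0^{2\pi}|f(re^{i\theta})|^p(1-r^2)^q\,d\theta=2\pi\|f_r\|_{p,q}^p$, so that the left-hand side of (3.2) is $2\lim_{r\to1}M(r)$; this limit exists and is finite because $f\in H_{p,q}$ forces $M$ to be bounded and Lemma 2 provides convergence. The engine of the argument is the identity recorded in the remark after Theorem 1, which in this notation reads $rM'(r)=g(r)$, where $g(r):=\int_{|z|<r}G(z)\,dxdy$.

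First I would pass to polar coordinates, using $dxdy=\rho\,d\rho\,d\theta$ together with $g'(\rho)=\int_0^{2\pi}G(\rho e^{i\theta})\rho\,d\theta$, to obtain
$$\int_{\mathbb D}(1-|z|^2)G(z)\,dxdy=\int_0^1(1-\rho^2)g'(\rho)\,d\rho.$$
Integrating by parts once gives $\big[(1-\rho^2)g(\rho)\big]_0^1+\int_0^1 2\rho\,g(\rho)\,d\rho$, and replacing $g(\rho)$ by $\rho M'(\rho)$ reduces the surviving integral to $\int_0^1 2\rho^2M'(\rho)\,d\rho$. A second integration by parts turns this into $\big[2\rho^2M(\rho)\big]_0^1-\int_0^1 4\rho M(\rho)\,d\rho=2\lim_{r\to1}M(r)-4\int_0^1\rho M(\rho)\,d\rho$. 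On the other hand, since $\int_0^{2\pi}|f(\rho e^{i\theta})|^p(1-\rho^2)^q\,d\theta=M(\rho)$, the same change to polar coordinates gives $4\int_{\mathbb D}|f(z)|^p(1-|z|^2)^q\,dxdy=4\int_0^1\rho M(\rho)\,d\rho$. Adding the two contributions, the terms $\int_0^1\rho M(\rho)\,d\rho$ cancel and exactly $2\lim_{r\to1}M(r)$ remains, which is the left-hand side of (3.2).

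The one step that requires genuine care is the vanishing of the boundary term $(1-\rho^2)g(\rho)$ at $\rho=1$ in the first integration by parts, and this is precisely where Theorem 1 is used: a priori $g(\rho)=\rho M'(\rho)$ need not remain bounded as $\rho\to1$, but since $M'(\rho)=2\pi\,d\|f_\rho\|_{p,q}^p/d\rho=o\big(1/(1-\rho)\big)$, we get $(1-\rho^2)g(\rho)=(1-\rho)(1+\rho)\rho M'(\rho)\to0$. The remaining boundary contributions are harmless: $g(0)=0$ and $\rho^2M(\rho)\to0$ as $\rho\to0$ kill the lower endpoints, while the upper endpoint of the second integration by parts is the surviving term $2\lim_{r\to1}M(r)$. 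Finally I would note that the identity $g'(\rho)=\int_0^{2\pi}G(\rho e^{i\theta})\rho\,d\theta$, the interchange of the order of integration, and the two integrations by parts are all legitimate on $D_r$ with $r<1$, where $W$ is smooth away from the finitely many zeros of $f$; these are handled exactly as the excision arguments in the proofs of Lemmas 1 and 2.
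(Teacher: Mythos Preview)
Your proof is correct, and its logical core matches the paper's: both arguments reduce the identity to a finite-$r$ statement and then invoke Theorem~1 to kill the term $(1-r^2)\,rM'(r)$ as $r\to 1$. The organization differs, however. The paper applies Green's theorem once more, now with the auxiliary function $1-|z|^2$ in place of $\log(1/|z|)$, redoes the excision argument at the zeros of $f$ (their $J_\varepsilon\to 0$), and obtains directly the finite-$r$ formula
\[
\int_{D_r}\bigl[(1-|z|^2)\nabla^2 W+4W\bigr]\,dxdy
=r(1-r^2)M'(r)+2r^2M(r),
\]
which is then sent to the limit. You instead take as input the identity $rM'(r)=\int_{D_r}G\,dxdy$ recorded in the remark after Theorem~1, switch to polar coordinates, and integrate by parts twice in the radial variable; unwinding your computation on $[0,r]$ reproduces exactly the displayed formula above, so the two arguments meet at the same intermediate step. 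Your route has the modest advantage that no fresh excision at the zeros is needed (that work is already absorbed in the remark's identity, and $G$ is locally integrable), while the paper's route is more self-contained and avoids relying on the remark. One small comment: your final paragraph about excision is largely unnecessary, since once $g(r)=\int_{D_r}G$ is given, both $g$ and $M$ are $C^1$ on $(0,1)$ and the one-variable integrations by parts require nothing beyond Fubini for the polar-coordinate identity.
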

\begin{proof}Denote
$$J_{\varepsilon}=\int_{\partial D(z_{0}, \varepsilon)}\Big( (1-|z|^{2})\frac{\partial W}
{\partial n}-W\frac{\partial(1-|z|^{2})}{\partial n} \Big)d\ell ,$$
where $z_0$ is a zero of $f$. Similar arguments as in the proof of
Lemma 1, we have $\lim_{\varepsilon\rightarrow 0}J_{\varepsilon}=0$.
Choosing suitable $r$ and $\varepsilon$ as in the proof of Theorem 1, by Green's theorem,  we have
$$\int_{\Omega}(1-|z|^{2})\nabla^{2}W(z)-\nabla^{2}(1-|z|^{2})\cdot W(z)dxdy=\int_{\partial \Omega}
\Big( (1-|z|^{2})\frac{\partial W}{\partial
n}-W\frac{\partial(1-|z|^{2})}{\partial n} \Big)d\ell$$
As $\varepsilon\rightarrow 0$, the above formula turns into
$$\int_{D_r}(1-|z|^{2})\nabla^{2}W(z)+4W(z)dxdy=\int_{\partial
D_r} \Big( (1-|z|^{2})\frac{\partial W}{\partial
n}-W\frac{\partial(1-|z|^{2})}{\partial n} \Big)d\ell$$
$$=\int_{0}^{2\pi}\Big((1-r^{2})\frac{\partial W}{\partial r}(re^{i\theta})+2rW(re^{i\theta})
\Big)rd\theta . \eqno{(3.3)}$$
By Theorem 1, we have
$$\lim_{r\rightarrow 1}\int_{0}^{2\pi}r(1-r^{2})\frac{\partial W}{\partial r}(re^{i\theta})d\theta=2\lim_{r\rightarrow 1}\left[(1-r)\frac{\partial}{\partial r}\int_{0}^{2\pi}W(re^{i\theta})d\theta\right]=0\ .$$
The proof is completed by letting $r\rightarrow 1$ in formula (3.3).
\end{proof}
Note that $\nabla^{2}|f|^p=p^2|f|^{p-2}|f'|^2$. Taking $q=0$ in formula (3.2), we obtain
$$\int_{0}^{2\pi}|f(e^{i\theta})|^{p}d\theta=\frac{p^2}{2}
\int_{\mathbb D}(1-|z|^{2})|f(z)|^{p-2}|f'(z)|^{2}dxdy+
2\int_{\mathbb D}|f(z)|^{p}dxdy ,$$ which is reminiscent of
Hardy-Stein identity
$$\frac{1}{2\pi}\int_0^{2\pi}|f(e^{i\theta})|^pd\theta=|f(0)|^p+\frac{p^2}{2\pi}\int_{\mathbb D}\log\frac{1}{|z|}|f(z)|^{p-2}|f'(z)|^2dxdy.$$

\end{document}